\newtheorem{theo}{Theorem}[section]  
\newtheorem{lemm}[theo]{Lemma}
\newtheorem{rema}[theo]{Remark}
\newtheorem{assumption}[theo]{Assumption}
\newcommand{\beq}{\begin{equation}}
\newcommand{\eeq}{\end{equation}}
\newcommand{\beqa}{\begin{eqnarray}}
\newcommand{\eeqa}{\end{eqnarray}}
\newcommand{\beqas}{\begin{eqnarray*}}
\newcommand{\eeqas}{\end{eqnarray*}}
\newcommand{\beqs}{\begin{equation*}}
\newcommand{\eeqs}{\end{equation*}}
\newcommand{\R}{\mathbb R}
\newcommand{\hesslb}{{\mu}}
\newcommand{\hessub}{L}
\newcommand{\hessubi}{{L_i}}
\newcommand{\dist}{\mbox{dist}}
\newcommand{\tik}{\tau_i^k}
\title{Convergence rate of incremental \\aggregated gradient algorithms}
\author[*]{M. G\"urb\"uzbalaban $^*$, A. Ozdaglar $^*$, P. Parrilo \thanks{Laboratory for Information and Decision Systems, Massachusetts Institute of Technology, Cambridge, MA 02140, USA. emails: mertg@mit.edu, asuman@mit.edu, parrilo@mit.edu.}}
\date{\today}
\begin{document}

\maketitle

\begin{abstract} 
Motivated by applications to distributed optimization over networks and large-scale data processing in machine learning, we analyze the deterministic incremental aggregated gradient method for minimizing a finite sum of smooth functions where the sum is strongly convex. This method processes the functions one at a time in a deterministic order and incorporates a memory of previous gradient values to accelerate convergence.  
Empirically it performs well in practice; however, no theoretical analysis with explicit rate results was previously given in the literature to our knowledge, in particular most of the recent efforts concentrated on the randomized versions. In this paper, we show that this deterministic algorithm has global linear convergence and characterize the convergence rate. We also consider an aggregated method with momentum and demonstrate its linear convergence. Our proofs rely on a careful choice of a Lyapunov function that offers insight into the algorithm's behavior and simplifies the proofs considerably. 
%
\end{abstract}

\section{Introduction}
\label{sec-intro}
We consider the following unconstrained optimization problem where the objective function is the sum of component functions:
\beqa\label{pbm-multi-agent}
	\min_{x \in \R^n} f(x) = \sum_{i=1}^m f_i(x) 
\eeqa
where each $f_i: \R^n \to \R$ is a convex, continuously differentiable function referred to as a \textit{component function}. This problem arises in many applications including least square problems \cite{AlgEkfs2003,Bertsekas1996incremental} or more general parameter estimation problems where $f_i$ is the corresponding loss function of the $i$-th data block \cite{GurOzPar15}, distributed optimization in wireless sensor networks \cite{Blatt2007incremental}, machine learning problems \cite{Leroux2012sgd,BottouLecun2005} and minimization of expected value of a function (where the expectation is taken over  a finite probability distribution or approximated by an $m$-sample average). 

One widely studied approach is the (deterministic) incremental gradient (IG) method, which cycles through the component functions using a cyclic order and updates the iterates using the gradient of a single component function one at a time \cite{Bertsekas99nonlinear}. This method can be faster than non-incremental methods since each step is relatively cheaper (one gradient computation instead of $m$ gradient computations in the non-incremental case) and each step makes reasonable progress on average \cite{Bertsekas99nonlinear}. However, IG requires the stepsize to go to zero to obtain convergence to an optimal solution of problem \eqref{pbm-multi-agent} even if it is applied to smooth and strongly convex component functions \cite{bertsekas2011incremental}, unless a restrictive ``gradient growth condition" holds \cite{Solodov98IncrGrad}. As a consequence, with a decaying stepsize, IG has typical sublinear convergence rate properties. The same observation applies both to stochastic gradient methods (which uses a random order for cycling through the component functions) \cite{schmidt2013fast} and to incremental Newton methods that are of second-order \cite{GurOzPar15}. 

Another interesting class of methods includes the  \textit{incremental aggregated gradient} (IAG) method of Blatt {\it et al.} (see \cite{Blatt2007incremental,TsengYun2014incremental}) and closely-related stochastic methods including the \textit{stochastic average gradient} (SAG) method \cite{Leroux2012sgd}, the SAGA method \cite{BachSagaMethod14} and the MISO method \cite{MairalSurrogate2013}. The applications include but are not limited to logistic regression and binary regression with $\ell_2$ regularization and more recently training conditional random fields \cite{Schmidt:15:CRF}. These methods process a single component function at a time as in incremental methods, but keeps a memory of the most recent gradients of all component functions so that an approximate gradient descent direction of $f$ 
is taken at each iteration. They might require an excessive amount of memory when $m$ is large, however they have fast convergence properties on strongly convex functions with constant stepsize without requiring the restrictive gradient growth condition. Furthermore, IAG forms approximations to the gradient of the objective function $\nabla f(x)$ at each step and this provides an accurate and efficient stopping criterion (stop if the norm of the approximate gradient is below a certain threshold) whereas it is often not clear ``when to stop" with IG. 


The IAG method was first proposed in a pioneer work by \cite{Blatt2007incremental} where its global convergence under some assumptions is shown. It is also shown that in the special case when each $f_i$ is a quadratic, IAG exhibits global linear convergence if the stepsize is small enough; however, neither an explicit convergence rate nor an explicit upper bound on the stepsize that can lead to linear convergence was given. This result is based on a perturbation analysis of the eigenvalues of a periodic dynamic linear system which is of independent interest in terms of the techniques used but is also highly technical and computationally demanding as it requires estimating the derivatives of the eigenvalues of a one-parameter matrix family. Furthermore, it only applies to quadratic functions. More recently, Tseng and Yun \cite{TsengYun2014incremental} proved global 
convergence under less restrictive conditions and local linear convergence in a more general setting when each component function satisfies a local Lipschitzian error condition, a condition  satisfied by locally strongly convex functions (around an optimal solution). Although the results are more general than those of \cite{Blatt2007incremental} as they apply beyond quadratics, the proofs are still involved and do not contain any explicit rate estimates because $(i)$ the constants involved in the analysis are implicit and hard to compute/approximate, $(ii)$ the results are asymptotic (they hold when the stepsize is small enough but bounds on the stepsize are not available). See Remark \ref{rema-compare-rate} for more detail. 

In this paper, we present a novel convergence analysis for the IAG method with several advantages and implications. First, our analysis is based on a careful choice of a Lyapunov function which leads to simple global and linear convergence proofs. Furthermore, our proofs give more insight into the behavior of IAG compared to previous approaches, showing that IAG can be treated as a perturbed gradient descent method where gradient errors can be interpreted as shocks with a finite duration that are fading away as one gets closer to an optimal solution (in a way we will make precise). Second, to our knowledge, our analysis is the first to provide explicit rate estimates for (deterministic) IAG methods. Third, we discuss an ``IAG method with momentum" and show its global and linear convergence. To our knowledge, this is the first global convergence and linear convergence result for an aggregated gradient method with memory. 

In many applications, there is a favorable deterministic order to process the functions. For instance, in source localization or parameter estimation problems over sensor networks, sensors are a part of a big network structure and are only able to communicate with their neighbors subject to certain constraints in terms of geography and distance, and this may enforce to follow a particular deterministic order (see e.g. \cite{Blatt2007incremental}). There exists other similar scenarios where the data is distributed over different units that are connected to each other in a particular fashion (e.g. connected through a ring network) where a local optimization algorithm that accesses each unit in a particular order is favorable \cite{rabbat2004distributed}. These applications motivate the study of deterministic incremental methods such as IAG which performs well in practice \cite{Blatt2007incremental}. 

There has been some recent work on the SAG algorithm, the stochastic version of the IAG method where the order is stochastic. For SAG, Le Roux \emph{et al.} \cite{Leroux2012sgd} and later Defazio \emph{et al.} \cite{BachSagaMethod14} established a global linear convergence rate in (expected cost) expectation which is a weaker averaged sense of convergence compared to the deterministic convergence we will consider in this work. In particular, our results applies to any order (as long as each function is visited at least once in a finite number of steps $K$) and hold deterministically (not in a probabilistic sense). As the performance of the determistic incremental methods are sensitive to the specific order chosen (see e.g. \cite[Example 1.5.6]{Bertsekas99nonlinear}, \cite[Figure 2.1.9]{Bertsekas15Book}), IAG can be slower than SAG if an unfavorable order is chosen and the analysis of IAG has to account for this worst-case scenarios. This is also reflected in Theorem \ref{theorem-IAG-global-lin-conv} where the rate of IAG has a worse (quadratic) dependance in the condition number $Q$ (see \eqref{def-cond-number} for a definition) whereas SAG has linear dependance \cite[Prop 1]{Leroux2012sgd}. We also note that most of the proofs and proof techniques used in the stochastic setting such as the fact that the expected gradient error is zero do not apply to the deterministic setting and this requires a new approach for analyzing IAG.

In the next section, Section 2, we describe the IAG method. Section \ref{sec-convergence} introduces the main assumptions and estimates for our convergence analysis and the linear rate result. In Section \ref{sec-iag-momentum}, we develop a new IAG method with momentum and provide a linear rate result for it. In Section \ref{sec-summary}, we conclude by discussing summary and future work.  
\paragraph{Notation} We use $\| \cdot \|$ to denote the standard Euclidean norm on $\R^n$. For a real scalar $x$, we define $(x)_+ = \max(x,0)$. The gradient and the Hessian matrix of $f$ at a point $x \in \R^n$ are denoted by $\nabla f(x)$ and $\nabla^2 f(x)$ respectively. The Euclidean (dot) inner product of two vectors $v_1,v_2\in\R^n$ is denoted by $\langle v_1, v_2\rangle$.

\section{IAG method}
For a constant stepsize $\gamma>0$, an integer $K\geq 0$ and arbitrary initial points $x^0, x^{-1}, \dots, x^{-K} \in \R^n$, the IAG method consists of the following iterations, 
	\beqa \quad g^k &=& \sum_{i=1}^m \nabla f_i(x^{\tik}),\label{iter-grad-update}\\   
           x^{k+1} &=& x^k - \gamma g^k, \quad k=0,1,2,\dots,\label{iter-x-update}
	\eeqa
where the gradient sampling times $\{\tik\}_{i=1}^m$ can be arbitrary as long as they are sampled at least once in the last $K$ iterations, i.e.
    \beq\label{ineq-bdd-delay} k \geq \tik \geq k - K, \quad i=1,2,\dots,m. 
    \eeq    
In other words, $K$ is an upper bound on the delay encountered by the gradients of the component functions. The update \eqref{iter-grad-update} determines the direction of motion $-g^k$ by approximating the steepest descent direction $-\nabla f(x^k)$ at (iterate) time $k$ from the recently computed gradients of the component functions (at times $\{\tik\}_{i=1}^m$). For example, if the component functions are processed one by one using a deterministic cyclic order on the index set $\{1,2,\dots,m\}$ with initialization $\tau_i^0=0$ for all $i$, then $\tik$ admits the recursion \beqas
        \tik = \begin{cases}
                        k, \quad \text{if $i = (k-1 \mod m) + 1$} \\
                        \tau^{k-1}_i, \quad \text{else},
                    \end{cases}, \quad 1\leq i \leq m, \quad k=1,2,\dots,
\eeqas
which satisfies $\tik \geq k-(m-1)$ for all $i,k$ where $K=m-1$. This is the original IAG method introduced by Blatt \textit{et al.}\cite{Blatt2007incremental}. Later on, Tseng and Yun \cite{TsengYun2014incremental} generalized this method by allowing more general gradient sampling times $\{\tik\}$ with bounded delays, i.e. satisfying \eqref{ineq-bdd-delay}.  

As IAG takes an approximate steepest gradient descent direction, it is natural to analyze it as a perturbed steepest gradient descent method. In fact, in the special case, when $K=0$, all the gradients are up-to-date and IAG reduces to the classical (non-incremental) gradient descent (GD) method which is well known. Therefore, the more interesting case that we will analyze is when $K$ is strictly positive. For simplicity of notation in our analysis, we will also take 
    \beq\label{eq-iag-init} x^0 = x^{-1} = \dots = x^{-K}
    \eeq
This results in the initialization $\tau_i^0=0$ for all $i$. However, it will be clear that our analysis can be extended to other (arbitrary) choices of initial points $\{x^j\}_{j=-K}^0$ in a straightforward manner. 
\section{Convergence analysis}
\label{sec-convergence}
\subsection{Preliminaries}
We will make the following assumptions that have appeared in a number of papers analyzing incremental methods including \cite{Solodov98IncrGrad}, \cite{TsengYun2014incremental}, \cite{AlgEkfs2003}, \cite{BachSagaMethod14} and \cite{Leroux2012sgd}.

\begin{assumption}\label{assum-strong-cvx-lip-grad}\textbf{(Strong convexity and Lipschitz gradients)} 
\begin{itemize}
    \item [(i)] Each $f_i$ is $L_i$-smooth meaning it has Lipschitz continuous gradients on $\R^n$ 
satisfying
   $$ \| \nabla f_i (y) - \nabla f_i(z) \| \leq L_i \|y-z\|,  \quad \forall y,z \in \R^n,$$
where $L_i \geq 0$ is the Lipschitz constant, $i=1,2,\dots,m$. Let $\hessub= \sum_{i=1}^m \hessubi$.  
   \item [(ii)] The function $f$ is strongly convex on $\R^n$ with parameter $\hesslb>0$ meaning that the function $x \mapsto f(x) - \frac{\hesslb}{2}\|x\|^2$ is convex.
\end{itemize} 
\end{assumption}
\noindent It follows by the triangle inequality that $f$ has Lipschitz continous gradients with a Lipschitz constant $\hessub$.\footnote{The Lipschitz constant $L=\sum_i L_i $ of $\nabla f$ may not be the best Lipschitz constant as Lipschitz constants $L_i$ of $\nabla f_i$ are subadditive.} We define the condition number of $f$ as
   \beq\label{def-cond-number} Q = \frac{L}{\hesslb}\geq 1
   \eeq
(see e.g.  \cite{Nesterov04CvxBook}).  As an example, in the special case when each $f_i$ is a quadratic function, $\nabla^2 f_i(x)$ is a constant (matrix) for each $i$ and we can take $L_i$ to be its largest eigenvalue whereas the strong convexity constant $c$ can be taken as the  smallest eigenvalue of the Hessian of $f$.

A consequence of Assumption \ref{assum-strong-cvx-lip-grad} on the strong convexity of $f$ is that there exists a unique optimal solution of the problem \eqref{pbm-multi-agent} which we denote by $x^*$. In addition to the strong convexity, by the gradient Lipschitzness assumption, we have
    \beqa\label{grad-lip-bd}\| \nabla f(x) \| &\leq& L \| x - x^* \|, \quad ~~\forall x \in \R^n, \\
     f(x) - f(x^*) &\leq& \frac{L}{2}\| x- x^*\|^2, \quad \forall x \in \R^n, \label{grad-lip-in-cost}
     \eeqa
(see \cite[Theorem 2.1.5]{Nesterov04CvxBook}). In addition, it follows from \cite[Theorem 2.1.12]{Nesterov04CvxBook} that    
\beqas \langle \nabla f(x), x - x^* \rangle \geq \frac{\mu L}{\mu + L} \| x- x^*\|^2 + \frac{1}{\mu + L} \|\nabla f(x)\|^2,  \quad \forall x \in \R^n. \label{str-cvx-ineq}
\eeqas      
We finally introduce the following lemma for proving the linear rate for IAG. We omit the proof due to space considerations, for a proof see Feyzmadhavian \textit{et al.} \cite{feyzmahdavian2014delayed} where this lemma is used to analyze the effect of delays in a first-order method. Coarsely speaking, the intuition behind this lemma is the following: If a non-negative sequence $\{V_k\}$ that decays to zero linearly obeying $V_{k+1} \leq p V_k$ for some $p<1$ is perturbed with an additive (noise) shock term that depends on the recent history, i.e. the shock at step $k$ is on the order of $V_\ell$ where $\ell \in [k-d(k), k]$ and $d(k)$ is the time interval (duration) of the shock, the linear convergence property can be preserved if the shocks are small enough but this comes at the expense of a degraded rate $r > p$ which is determined by the amplitude of the shocks (controlled by the parameter $q$) and the duration of the shocks. 
   
\begin{lemm}\label{lem-pert-lin-conv} Let $\{V_k\}$ be a sequence of non-negative real numbers satisfying
		$$ V_{k+1} \leq p V_k + q \max_{\left(k-d(k)\right)_+ \leq \ell \leq k} V_\ell, \quad k \geq 0,$$
for some non-negative constants $p$ and $q$. If $p+q<1$ and $0 \leq d(k) \leq  d_{\max}$
for some positive constant $d_{\max}$, then 
         $$V_k \leq r^k V_0, \quad k\geq 1,$$ 
where $r=(p+q)^{\frac{1}{1+d_{\max}}}$.            
\end{lemm}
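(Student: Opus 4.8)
The plan is to prove Lemma~\ref{lem-pert-lin-conv} by establishing, via strong induction, a slightly stronger claim that controls not just $V_k$ but a window of recent iterates simultaneously. The natural quantity to track is $W_k = \max_{(k-d_{\max})_+ \le \ell \le k} V_\ell$, or more directly, the assertion $V_k \le r^k V_0$ together with the observation that because $r<1$, a bound of the form $V_\ell \le r^\ell V_0$ for all $\ell \le k$ immediately yields $\max_{(k-d(k))_+ \le \ell \le k} V_\ell \le r^{(k-d_{\max})_+} V_0 \le r^{k-d_{\max}} V_0$ (treating the edge case near $0$ separately, where the max is at most $V_0$ and the same inequality still holds since $r \le 1$). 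So the proof reduces to checking the one-step recursion closes under the ansatz.

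First I would fix $r = (p+q)^{1/(1+d_{\max})}$ and record the two facts I need about it: since $0 < p+q < 1$ we have $p+q < r < 1$, and — this is the crucial algebraic point — $r^{1+d_{\max}} = p+q$, equivalently $r \cdot r^{d_{\max}} = p+q$, so that $p \cdot r^{d_{\max}} + q \le p + q = r^{1+d_{\max}} \le r \cdot r^{d_{\max}}$... I need to be a little careful about the direction here. The clean inequality to aim for is: for every $k$,
\beqs
p\, r^{k} + q\, r^{(k-d_{\max})_+} \le r^{k+1}.
\eeqs
When $k \ge d_{\max}$ this reads $p\,r^k + q\,r^{k-d_{\max}} \le r^{k+1}$, i.e., dividing by $r^{k-d_{\max}}$, $p\,r^{d_{\max}} + q \le r^{d_{\max}+1} = p+q$, which holds because $r^{d_{\max}} \le 1$. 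When $k < d_{\max}$ the term $r^{(k-d_{\max})_+} = r^0 = 1$ and one checks $p\,r^k + q \le p + q = r^{1+d_{\max}} \le r^{k+1}$ since $k+1 \le 1+d_{\max}$ and $r \le 1$.

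With that in hand the induction is routine. The base case $k=1$: from the hypothesis $V_1 \le p V_0 + q \max_{(-d(0))_+ \le \ell \le 0} V_\ell = p V_0 + q V_0 = (p+q) V_0 \le r V_0$ since $p+q \le r$ (actually $p+q = r^{1+d_{\max}} \le r$). For the inductive step, assume $V_\ell \le r^\ell V_0$ for all $1 \le \ell \le k$ (and $V_0 = r^0 V_0$ trivially); then
\beqs
V_{k+1} \le p V_k + q \max_{(k-d(k))_+ \le \ell \le k} V_\ell \le p\, r^k V_0 + q\, r^{(k-d_{\max})_+} V_0 \le r^{k+1} V_0,
\eeqs
using monotonicity of $\ell \mapsto r^\ell$ (decreasing, since $r<1$) to bound the max by its value at the smallest admissible index, then $d(k) \le d_{\max}$ to push that index down to $(k-d_{\max})_+$, and finally the displayed algebraic inequality. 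The main obstacle — really the only subtlety — is handling the boundary regime $k < d_{\max}$ where the truncation $(\cdot)_+$ is active and one cannot simply divide through by powers of $r$; one must instead use $r \le 1$ and the exponent comparison $k+1 \le d_{\max}+1$ directly. Everything else is bookkeeping, and I would present it compactly rather than belaboring these elementary estimates.
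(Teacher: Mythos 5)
Your proof is correct: the strong induction on the ansatz $V_\ell \le r^\ell V_0$, combined with the key algebraic inequality $p\,r^{k} + q\,r^{(k-d_{\max})_+} \le r^{k+1}$ (checked separately in the regimes $k \ge d_{\max}$ and $k < d_{\max}$), closes the recursion exactly as needed, and your handling of the truncation $(\cdot)_+$ near $k=0$ is the right way to deal with the only real subtlety. The paper itself omits the proof and defers to Feyzmahdavian \textit{et al.}, where the argument is precisely this kind of induction, so your approach matches the intended one; the only degenerate case you gloss over is $p+q=0$ (where $r=0$ and the claim is immediate since $V_{k+1}\le 0$), which is harmless.
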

          
\subsection{Bounding gradient error}
\label{subsec-grad-error}
We denote the distance to the optimal solution at iterate $k$ by
   \beq \dist_k = \|x^k - x^*\|
   \eeq
and the gradient error by
  \beq \label{def-iag-grad-error}
     e^k = g^k - \nabla f(x^k).
  \eeq
We will show that the gradient error can be bounded in terms of a finite sum involving distances of iterates to the optimal solution. Using the triangle inequality and the Lipschitzness of the gradients, for any $k\geq 0$, 
  \beqa \|e^k\| \leq \sum_{i=1}^m \| \nabla f_i(x^{\tik}) - \nabla f_i(x^k) \| 
   \leq   \sum_{i=1}^m \hessubi \| x^{\tik} - x^k \| \label{ineq-grad-error-dist-based}.
\eeqa
As the gradient delays are bounded by $K$ (see \eqref{ineq-bdd-delay}), by a repetitive application of the triangle inequality, we obtain for any $k\geq 0$,
\beqa   
   \|e^k\| &\leq &  \sum_{i=1}^m \hessubi \sum_{j=\tik}^{k-1}  \| x^{j+1} - x^j \| \label{ineq-grad-abs-error}
   \leq   \hessub \sum_{j=(k-K)_+}^{k-1}  \| x^{j+1} - x^j \|  \label{ineq-grad-error-dist} \\
   &=&  \gamma \hessub \sum_{j=(k-K)_+}^{k-1}  \| g^j \| 
   \leq  \gamma \hessub  \sum_{j=(k-K)_+}^{k-1} \bigg( \| \nabla f(x^j) \| + \|e^j\| \bigg) \label{grad-err-bd-of-order-step}
  \eeqa
(with the convention that $\|e^0\| = 0$ which is implied by \eqref{eq-iag-init}).
The inequality \eqref{grad-err-bd-of-order-step} provides a recursive upper bound for the gradient error that relates the gradient error $\|e^k\|$ to the previous gradient errors $\|e^j\|$ with $j<k$. Bounding the $\|e^j\|$ term in \eqref{grad-err-bd-of-order-step} with the previously obtained bound \eqref{ineq-grad-error-dist-based} on the gradient error and by the triangle inequality,
   \beqa
   	   \|e^k\| &\leq & \gamma\hessub  \sum_{j=(k-K)_+}^{k-1} \bigg( \| \nabla f(x^j) \| +   \sum_{i=1}^m \hessubi \| x^{\tau_i^j} - x^j \| \bigg)\label{ineq-iag-main-grad-error-bound-begin} \\
       &\leq& \gamma\hessub  \sum_{j=(k-K)_+}^{k-1} \bigg( \| \nabla f(x^j) \| +   \sum_{i=1}^m \hessubi \big(\| x^{\tau_i^j} - x^* \| + \| x^* - x^j \|\big) \bigg) \nonumber\\   	   
       &\leq& \gamma\hessub \sum_{j=(k-K)_+}^{k-1} \bigg( \| \nabla f(x^j)\| + \sum_{i=1}^m \hessubi \big(\max_{\ell \in \{ \tau_i^j\}_{i=1}^m } \dist_\ell + \dist_j \big) \bigg). \nonumber 
   \eeqa
Invoking \eqref{ineq-bdd-delay} on the boundedness of the gradient delays by $K$ once more and using \eqref{grad-lip-bd} on gradient Lipschitzness to bound the norm of the gradient, we finally get    
   \beqa       
   	   \|e^k\|  &\leq& \gamma\hessub \sum_{j=(k-K)_+}^{k-1} \bigg( \| \nabla f(x^j)\| + 2\hessub \max_{(j-K)_+ \leq\ell\leq j} \dist_\ell \bigg)\nonumber \\
	&\leq& \gamma\hessub  \sum_{j=(k-K)_+}^{k-1} \bigg( \hessub~\dist_j + 2\hessub \max_{(j-K)_+ \leq\ell\leq j} \dist_\ell \bigg)\nonumber \\
	&\leq& \gamma\hessub  \sum_{j=(k-K)_+}^{k-1} \bigg( 3\hessub \max_{(j-K)_+ \leq\ell\leq j} \dist_\ell \bigg) 
	\leq 3\gamma\hessub^2 K    \max_{(k-2K)_+ \leq\ell\leq k-1} \dist_\ell.  \label{ineq-iag-main-grad-error-bound}
   \eeqa
  
\subsection{Linear convergence analysis}
\label{subseq-lin-conv}
From the IAG update formul\ae ~\eqref{iter-grad-update}--\eqref{iter-x-update} and the definition \eqref{def-iag-grad-error} of the gradient error, it follows directly by taking norm squares that 
   \beqa \dist_{k+1}^2 &=& \dist_k^2 -2\gamma \langle \nabla f(x^k), x^k - x^* \rangle + \gamma^2 \|\nabla f(x^k)\|^2  + E_k  \label{error-term-dist-start}
\eeqa
where the gradient errors are incapsulated by the last term 
     \beq E_k = \gamma^2 \|e^k\|^2 - 2\gamma \langle x^k - x^* - \gamma \nabla f(x^k), e^k \rangle. \label{error-term-dist-iteration}
     \eeq 
Using the inequality \eqref{str-cvx-ineq} for strongly convex functions,
   \beqa \dist_{k+1}^2 &\leq& \bigg (1- 2\gamma \frac{\mu L}{\mu + L} \bigg) \dist_k^2 + \gamma \bigg(\gamma - \frac{2}{\mu + L}\bigg)\|\nabla f(x^k)\|^2 + E_k  \\
      &\leq& \bigg (1- 2\gamma \frac{\mu L}{\mu + L} \bigg) \dist_k^2 + E_k, \quad \mbox{if} \quad \gamma \leq  \frac{2}{\mu + L }. \label{ineq-dist-bound-with-grad-error}
\eeqa
Note that when $K=0$, IAG reduces to the GD method where $e^k = 0$ and the $E_k$ term vanishes. In this special case, the last  inequality simplifies to
    \beqa \|x^k - x^* - \gamma \nabla f(x^k)\|^2 \leq \bigg (1- 2\gamma \frac{\mu L}{\mu + L} \bigg) \dist_k^2 \leq \dist_k^2 \label{classic-gradient-method-rate} 
\eeqa
and a choice of $\gamma = \frac{2}{\mu + L }$ leads to 
the global linear convergence satisfying
   $$ \dist_{k+1} = \|x^k - x^* - \frac{2}{\mu + L } \nabla f(x^k)\| \leq \bigg(\frac{Q-1}{Q+1}\bigg) \dist_k \leq \bigg(\frac{Q-1}{Q+1}\bigg)^{k} \dist_0. 
   $$
This is the standard analysis of the GD method (see \cite[Theorem 2.1.15]{Nesterov04CvxBook}).  
%
The next theorem shows that in the more interesting general case when there are gradient errors, i.e. when $K>0$ and $e^k \neq 0$, a similar linear convergence argument can be done although one has to use a smaller stepsize to compensate for the gradient errors. The main idea is to eliminate the gradient error $\|e^k\|$ terms in \eqref{ineq-dist-bound-with-grad-error} by replacing them with terms involving only distances. This can be done by invoking \eqref{ineq-iag-main-grad-error-bound} which essentially provides an upper bound for the gradient errors in terms of distances. Then, Lemma \ref{lem-pert-lin-conv} with $V_k = \dist_k^2$ applies and provides the convergence rate. 

\begin{theo}\label{theorem-IAG-global-lin-conv} Suppose that Assumption \ref{assum-strong-cvx-lip-grad} holds. Consider the IAG iterations \eqref{iter-grad-update}--\eqref{iter-x-update} with an integer gradient delay parameter $K > 0$ and a constant stepsize $ 0 < \gamma < \bar{\gamma}$ where
   $$\bar{\gamma} = \bigg(\frac{a\mu}{KL}\bigg)\frac{1}{\mu + \hessub}$$
with $a = 8/25$.  Then, IAG iterates $\{x^k\}$ are globally linearly convergent. Furthermore, when $\gamma=\gamma_*:=\bar{\gamma}/2$ we have for $k=1,2,\dots$, 
\beqa
  \|x^k - x^*\| &\leq&  \bigg(1 -\frac{c_K }{(Q+1)^2}  \bigg)^k \|x^0 - x^*\|,  \label{conv-rate-in-dist}\\
  f(x^k) - f(x^*) &\leq& \frac{L}{2} \bigg(1 -\frac{c_K }{(Q+1)^2}  \bigg)^{2k}\|x^0 - x^*\|^2,\label{conv-rate-in-cost}
\eeqa
where  $c_K = \frac{2}{25} \bigl[K(2K+1)\bigr]^{-1}$. 
\end{theo}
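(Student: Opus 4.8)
The plan is to recast the squared distances $V_k := \dist_k^2$ into the perturbed linear recursion of Lemma \ref{lem-pert-lin-conv}, by eliminating the error term $E_k$ of \eqref{ineq-dist-bound-with-grad-error} using the distance-based gradient-error bound \eqref{ineq-iag-main-grad-error-bound}, and then reading off the rate. Note first that for every $0<\gamma<\bar{\gamma}$ one has $\gamma < \bar{\gamma} \leq \frac{1}{\mu+L} < \frac{2}{\mu+L}$ (since $a<1$, $K\geq1$, $\mu\leq L$), so that \eqref{ineq-dist-bound-with-grad-error} and the contraction estimate \eqref{classic-gradient-method-rate} are both available.

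First I would bound $E_k$ from \eqref{error-term-dist-iteration} in terms of distances only: by Cauchy--Schwarz, $E_k \leq \gamma^2\|e^k\|^2 + 2\gamma\,\|x^k-x^*-\gamma\nabla f(x^k)\|\,\|e^k\|$, and the middle factor is at most $\dist_k$ by \eqref{classic-gradient-method-rate}. Inserting $\|e^k\|\leq 3\gamma L^2 K\max_{(k-2K)_+\leq\ell\leq k-1}\dist_\ell$ from \eqref{ineq-iag-main-grad-error-bound} and using $\dist_k\dist_\ell\leq\max(\dist_k^2,\dist_\ell^2)$ yields
$$E_k \leq \bigl(9\gamma^4 L^4 K^2 + 6\gamma^2 L^2 K\bigr)\max_{(k-2K)_+\leq\ell\leq k}V_\ell.$$
Together with \eqref{ineq-dist-bound-with-grad-error} this gives $V_{k+1}\leq pV_k + q\max_{(k-2K)_+\leq\ell\leq k}V_\ell$ with $p = 1-2\gamma\frac{\mu L}{\mu+L}$, $q = 9\gamma^4 L^4 K^2 + 6\gamma^2 L^2 K$, and delay bound $d_{\max}=2K$, which is exactly the hypothesis of Lemma \ref{lem-pert-lin-conv}.

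Next I would verify that $p,q\geq0$ and $p+q<1$. Non-negativity of $p$ is clear, since $2\gamma\frac{\mu L}{\mu+L} < \frac{2a\mu^2}{K(\mu+L)^2}\leq\frac a2<1$ (using $K\geq1$, $\mu+L\geq2\mu$). Writing $S:=\frac{\mu L}{\mu+L}=\frac{L}{Q+1}$, the condition $p+q<1$ is equivalent, after dividing by $2\gamma$, to $\tfrac92\gamma^3 L^4 K^2 + 3\gamma L^2 K < S$; using $\gamma L^2 K<\bar{\gamma} L^2 K = aS$ and $\gamma<aS/(L^2K)$, together with $S^2/L^2\leq\tfrac14$ and $K\geq1$, the left side is below $S\bigl(\tfrac98 a^3 + 3a\bigr)$. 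The value $a=8/25$ is chosen exactly so that $3a=\tfrac{24}{25}<1$ and $\tfrac98 a^3<\tfrac1{25}$, hence $\tfrac98 a^3 + 3a<1$ and $p+q<1$. Lemma \ref{lem-pert-lin-conv} then gives $V_k\leq r^kV_0$ with $r=(p+q)^{1/(2K+1)}<1$, which is the claimed global linear convergence on all of $(0,\bar{\gamma})$.

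Finally, for the explicit rate I would take $\gamma=\gamma_*=\bar{\gamma}/2$, so $\gamma_* L^2 K = \tfrac12 aS$ and $\gamma_* = aS/(2L^2K)$; direct substitution gives $2\gamma_* S = a/[K(Q+1)^2]$, $6\gamma_*^2 L^2 K = \tfrac32 a^2/[K(Q+1)^2]$ and $9\gamma_*^4 L^4 K^2 = \tfrac9{16}a^4/[K^2(Q+1)^4]$, so that
$$1-(p+q) = \frac{1}{K(Q+1)^2}\Bigl[a\bigl(1-\tfrac32 a\bigr) - \frac{9a^4}{16K(Q+1)^2}\Bigr] \geq \frac{4}{25\,K(Q+1)^2},$$
because $a(1-\tfrac32 a)=\tfrac{104}{625}$ while the subtracted term is at most $\tfrac{9}{64}a^4<\tfrac{2}{625}$ (as $K(Q+1)^2\geq4$). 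From $V_k\leq r^kV_0$ we get $\dist_k\leq(p+q)^{k/(2(2K+1))}\dist_0$; Bernoulli's inequality gives $\bigl(1-\tfrac{c_K}{(Q+1)^2}\bigr)^{2(2K+1)}\geq 1-\tfrac{4}{25K(Q+1)^2}\geq p+q$ with $c_K=\tfrac2{25K(2K+1)}$, and taking $2(2K+1)$-th roots yields \eqref{conv-rate-in-dist}; then \eqref{conv-rate-in-cost} follows from \eqref{grad-lip-in-cost}. The main obstacle is purely quantitative bookkeeping: the constant $a$ must be calibrated so that $p+q<1$ holds uniformly on $(0,\bar{\gamma})$ and, simultaneously, $1-(p+q)$ at $\gamma_*$ stays large enough that the linear rate survives the degradation incurred by the $(2K+1)$-th root in Lemma \ref{lem-pert-lin-conv}, with the $Q+1$ and $K$ dependencies threaded carefully through that root.
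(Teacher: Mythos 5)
Your proposal is correct and follows essentially the same route as the paper: bound $E_k$ via \eqref{classic-gradient-method-rate} and \eqref{ineq-iag-main-grad-error-bound}, obtain the recursion $V_{k+1}\leq p V_k + q\max_{(k-2K)_+\leq\ell\leq k}V_\ell$ with the same $p(\gamma),q(\gamma)$, apply Lemma \ref{lem-pert-lin-conv} with $d_{\max}=2K$, and extract the rate at $\gamma_*$ via $(1-x)^\alpha\leq 1-\alpha x$. The only (immaterial) difference is bookkeeping: the paper absorbs the $O(\gamma^4)$ term into a quadratic upper bound $\tfrac{25}{4}\gamma^2L^2K$ minimized at $\gamma_*$, whereas you bound the quartic term directly at $\gamma_*$; both yield $p+q\leq 1-\tfrac{4}{25K(Q+1)^2}$.
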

\begin{proof}  As $K>0$, there are gradient delays and the error term $E_k$ defined by \eqref{error-term-dist-iteration} that appears in the evolution \eqref{ineq-dist-bound-with-grad-error} of iterates is non-zero in general. Therefore, the convergence rate is limited by how fast this error term decays. Assume $\gamma \leq \frac{2}{\mu + L}$ so that \eqref{ineq-dist-bound-with-grad-error} is applicable. Using the triangle inequality on $E_k$,  we see that
   \beqas  
        | E_k |  &\leq& \gamma^2 \|e^k\|^2 + 2\gamma \|e^k\| \| x^k - x^* - \gamma \nabla f(x^k) \| 
                  \leq \gamma^2 \|e^k\|^2 + 2\gamma \|e^k\| \dist_k  
     \eeqas
where we used \eqref{classic-gradient-method-rate} in the last step. Using \eqref{ineq-iag-main-grad-error-bound} on the gradient error, we obtain
     \beqas              
          |E_k|  &\leq& 9 \gamma^4 L^4 K^2 \max_{(k-2K)_+ \leq\ell\leq k-1} \dist_\ell^2 + 6\gamma^2 L^2 K  \max_{(k-2K)_+ \leq\ell\leq k-1} \dist_\ell  \dist_k \\
                  &\leq& \big(9 \gamma^4 L^4 K^2  + 6\gamma^2 L^2 K\big)  \max_{(k-2K)_+ \leq\ell\leq k} \dist_\ell^2. 
   \eeqas     
Plugging this bound into the recursive inequality  \eqref{ineq-dist-bound-with-grad-error} for distances leads to
   $$ \dist_{k+1}^2 \leq p(\gamma) \dist_k^2 + q(\gamma) \max_{(k-2K)_+ \leq\ell\leq k} \dist_\ell^2$$
with     
    $$  p(\gamma) =1- 2\gamma \frac{\mu L}{\mu + L} , \quad  q(\gamma) =  9 \gamma^4 L^4 K^2  + 6\gamma^2 L^2 K.$$ 
If the stepsize is small enough satisfying the condition $s(\gamma):= p(\gamma) + q(\gamma) < 1$, then by Lemma \ref{lem-pert-lin-conv} applied with $V_k = \dist_k^2$, IAG is globally linearly convergent.\footnote{Note that when $K=0$, the inequality $s(\gamma)<1$ is linear in $\gamma$, whereas when $K>0$ it is fourth order in $\gamma$ and is more restrictive.} 
Ignoring the positive $O(\gamma^4)$ term in $q(\gamma)$, this condition would require at least \beqa 1- 2\gamma \frac{\mu L}{\mu + L} + 6\gamma^2 L^2 K < 1\iff 0 < \gamma < \bigg(\frac{\mu}{3LK}\bigg)\frac{1}{\mu + L}= \frac{25}{24}\bar{\gamma}\label{thres-step-iag}
     \eeqa
which would imply 
   \beq \gamma^2 L^2 K \leq \frac{1}{9K(Q+1)^2}\leq \frac{1}{36} \label{1-over-36}
   \eeq
as both $K\geq 1$ and $Q\geq 1$. We will show that under the slightly more restrictive condition $0 < \gamma <\bar{\gamma}$, one can also handle the $O(\gamma^4)$ term as well and guarantee $s(\gamma)<1$. So assume $0 < \gamma <\bar{\gamma}$. The condition \eqref{thres-step-iag} holds and the inequality \eqref{1-over-36} is valid. This implies that 
   $$ q(\gamma) = 3\gamma^2 L^2 K (2 + 3\gamma^2 L^2 K ) \leq 3\gamma^2 L^2 K (2 +\frac{1}{12} ) \leq \frac{25}{4} \gamma^2 L^2 K $$
and after straightforward calculations that   
  \beqa s(\gamma) := p(\gamma) + q(\gamma) 
               \leq  1- 2\gamma \frac{\mu L}{\mu + L}  + \frac{25}{4} \gamma^2 L^2 K < 1. \label{ineq-quad-upper-bound-on-rate}  
  \eeqa
Then by Lemma \ref{lem-pert-lin-conv}, we have global linear convergence of the sequence $V_k = \dist_k^2$ to zero with rate $\rho(\gamma) = s(\gamma) ^{1/(2K+1)}<1$. This shows the global linear convergence of IAG. It remains to show the claimed convergence rate for $\gamma=\gamma_*$. 
Let $\gamma = \gamma_*$. Note that this minimizes the quadratic with respect to $\gamma$ in \eqref{ineq-quad-upper-bound-on-rate} leading to
   \beq\label{ineq-lin-conv-bd} s (\gamma_*) \leq 1 - \frac{4\hesslb^2}{25K (\hesslb^2 + \hessub^2)} = 1 - \frac{4}{25K (Q + 1)^2}. 
    \eeq
Then, as the linear convergence rate of the sequence $\{\dist_k^2\}$ is 
$\rho(\gamma_*)<1$, by taking square roots, the sequence $\{\dist_k\}$ is linearly convergent with rate $r_*=\rho(\gamma_*)^{1/2}$ satisfying
  $$ \dist_k \leq r_*^k \dist_0, \quad r_*= \big(s({\gamma}_*) ^{1/(2K+1)}\big)^{1/2} \leq 1 -\frac{c_K}{(Q+1)^2},  $$
where we used \eqref{ineq-lin-conv-bd} and the inequality $(1-x)^a \leq 1-ax$ for $x,a \in [0,1]$ to get an upper bound for $\bar{\rho}$. This proves the rate \eqref{conv-rate-in-dist}. Then, \eqref{conv-rate-in-cost} follows directly from  \eqref{grad-lip-in-cost} and \eqref{conv-rate-in-dist}.
\end{proof}

\begin{rema}\label{rema-compare-rate} \textbf{(Comparison with previous results)} In a related work, Tseng and Yun shows the existence of a positive constant $C$ that if the stepsize $\gamma$ is small enough then IAG has a $K$-step linear convergence rate of $\sqrt{1-C\gamma}$ under a Lipschitzian error assumption which is a strong-convexity-like condition \cite[Theorem 6.1]{TsengYun2014incremental}. However, in their analysis, there is no explicit rate estimate; as $(i)$ results are asymptotic, holding for $\gamma$ small enough without giving a precise interval for $\gamma$, $(ii)$ the constant $C$ is implicit and hard to compute/approximate as it depends on several other implicit constants and a Lipschitzian error parameter $\tau$. Our analysis is not  only simple using basic distance inequalities but also the constants are transparent and explicit.  
\end{rema}
\begin{rema} \textbf{(IAG versus IG)} Theorem \ref{theorem-IAG-global-lin-conv} shows that IAG with constant stepsize is globally linearly convergent, however the same is not true for IG. In fact, IG with constant stepsize is linearly convergent to a neighborhood of the solution but does not in general converge to the optimal solution due to the existence of gradient errors that are typically bounded away from zero \cite{Bertsekas99nonlinear}. As a consequence, achieving global convergence with IG requires to use a stepsize that goes to zero and this results in typically slow convergence \cite{bertsekas2011incremental}. In contrast, the gradient error in IAG is controlled by the distance of recent iterates to the optimal solution, therefore it is attenuated as the iterates get closer to the optimal solution and a diminishing stepsize is not needed to control the error.  

\end{rema}

\begin{rema} \textbf{(Local strong convexity implies local linear rate)} We note that when $f$ is not globally strongly convex but only locally strongly convex around a stationary point, for instance when the Hessian is not degenerate around a locally optimal solution, by a reasoning along the lines of the proof of Theorem \ref{theorem-IAG-global-lin-conv}, it is possible to show that IAG is locally linearly convergent.
\end{rema}

\section{IAG with momentum} 
\label{sec-iag-momentum}
An important variant of the GD method is the heavy-ball method \cite{Pol87} which extrapolates the direction implied by the previous two iterates by the following update rule: 
$$x^{k+1} = x^k - \gamma \nabla f(x^k) + \beta(x^k - x^{k-1})$$ 
where $\beta\geq 0$ is the \textit{momentum} parameter. It can be shown that the heavy-ball method can achieve a faster local convergence than GD when $\beta$ is in a certain range \cite[Section 3.1]{Pol87}. There has also been much interest in understanding its global convergence properties \cite{Lessard2015HeavyBall,Ghadimi2015HeavyBall}. Accelerated gradient methods introduced by Nesterov \cite{Nes1983AccGradMethod,Nesterov2007gradient} can also be thought of as \textit{momentum} methods where the momentum parameter is variable and appropriately chosen. There has been a lot of recent interest in these accelerated methods as they have optimal iteration complexity properties under some conditions \cite{Nesterov04CvxBook}.

In contrast to the recent advances in non-incremental methods with momentum, there has been less progress on incremental methods with momentum. In particular, no deterministic incremental methods with favorable convergence characteristics similar to those of accelerated gradient methods are currently known. However, there is the \textit{IG method with momentum} which consists of the \textit{inner} iterations 
   $$x_i^{k+1} = x_i^k -\gamma^k \nabla f_i(x^k) + \beta (x^k - x^{k-1}), \quad i=1,2,\dots,m \quad k\geq 1, $$
starting from $x_1^1 \in \R^n$ with the convention that $x_1^{k+1}=x_{m+1}^k$ where $\gamma^k$ is the stepsize \cite{Bertsekas1996incremental,Mangasarian1994Serial, TsengIncrGradient98}.  This method can be faster than IG on some problems especially when gradients have oscillatory behavior, however it would still require the stepsize go to zero due to gradient errors, leading to typical sublinear convergence \cite{TsengIncrGradient98}. It is natural to ask whether IAG with such an additional momentum term, which we abbreviate by \textit{IAG-M}, 
   \beq x^{k+1} = x^k - \gamma g^k + \beta(x^k-x^{k-1}), \quad k=0,1,2,\dots,\label{iter-x-update-bis}
   \eeq
would be globally convergent for $\beta$ in some range $(0,\bar{\beta})$. We expect that this algorithm can outperform IAG in problems where the individual gradients show oscillatory behavior because the momentum term provides an extra smoothing/averaging affect on the iterates. The global linear convergence of the IAG-M method for a certain range of $\beta$ values can be shown by a similar reasoning along the lines presented in Section \ref{sec-convergence}. Most of the logic in the derivation of the inequalities \eqref{ineq-grad-error-dist-based}--\eqref{ineq-iag-main-grad-error-bound} apply with the only difference that the $\|x^{j+1}-x^j\|$ terms will now contain an additional momentum term due to the modified update rule \eqref{iter-x-update-bis}. We however provide a sketch of the proof in the Appendix \ref{appendix-iag-m} for the sake of completeness.

\section{Discussion}
\label{sec-summary}
We analyzed the IAG method when component functions are strongly convex by viewing it as a gradient descent method with errors. To the best of our knowledge, our analysis provides the first explicit linear rate result. Furthermore, it is different than the existing two approaches \cite{Blatt2007incremental} and \cite{TsengYun2014incremental} in the sense that $(i)$ it is based on simple basic inequalities that makes global convergence analysis simpler, $(ii)$ gives more insight into the behavior of IAG. In particular, our analysis shows that the gradient errors can be treated as shocks with a finite duration which can be bounded in terms of distance of iterates to the optimal solution. Therefore, by choosing the stepsize small enough and using the strong convexity properties we can guarantee that the the distance to the optimal solution shrinks down at each step by a factor less than one. 

We also developed a new algorithm, IAG with momentum, and provided a linear convergence and rate analysis. It is expected that this algorithm can outperform IAG in problems where the individual gradients show oscillatory behavior, because the momentum term provides an extra (averaging) smoothing affect on the iterates. 

We note that the extension of IAG to the generalized version of \eqref{pbm-multi-agent}, $\min_{x\in\R^n} \sum_{i=1}^m f_i(x)  + h(x) $
with $h:\R^n \to \R$ convex and possibly non-smooth (such as the indicator of a function when there are constraints) is simple by an additional (proximal) step, see     \cite{TsengYun2014incremental}. Extending our linear rate results to this case may be possible and is ongoing future work.

\bibliographystyle{plain}
\bibliography{iag_refs}

\newpage
\begin{appendix}
\section{Proof sketch of the global linear convergence of the IAG-M method}\label{appendix-iag-m}
Using the gradient error bound \eqref{ineq-grad-error-dist} and iterate update equation \eqref{iter-x-update-bis} of IAG-M, 
	\beqas \|e^k\| = \| g^k - \nabla f(x^k) \|&\leq& \hessub \sum_{j=(k-K)_+}^{k-1}  \| x^{j+1} - x^j \|  = \hessub \sum_{j=(k-K)_+}^{k-1}  \| \gamma g^j + \beta(x^j - x^{j-1})\| \\
   &\leq& \gamma \hessub \sum_{j=(k-K)_+}^{k-1} \|  g^j \| + \beta \hessub \sum_{j=(k-K)_+}^{k-1}\|x^j - x^{j-1}\|  \\
   &\leq& \gamma \hessub \sum_{j=(k-K)_+}^{k-1} (\| \nabla f(x^j) \| + \|e^j\|) + \beta \hessub \sum_{j=(k-K)_+}^{k-1}\|x^j - x^{j-1}\|.  
   \eeqas
 Then, using \eqref{grad-lip-bd} to bound the norm of the gradient and \eqref{ineq-grad-error-dist} to bound $\|e^j\|$, this becomes 
   \beqa
  \|e^k\| &\leq& \gamma \hessub \sum_{j=(k-K)_+}^{k-1} (L \dist_j + \|e^j\|) + \beta \hessub \sum_{j=(k-K)_+}^{k-1}\|x^j - x^{j-1}\| \nonumber \\
   &\leq& \gamma \hessub \sum_{j=(k-K)_+}^{k-1} (L \dist_j +  \hessub \sum_{\ell=(j-K)_+}^{j-1}  \| x^{\ell+1} - x^\ell \| ) + \beta \hessub \sum_{j=(k-K)_+}^{k-1}\|x^j - x^{j-1}\| \nonumber \\
   &\leq& \gamma \hessub \sum_{j=(k-K)_+}^{k-1} \big(L \dist_j +  \hessub \sum_{\ell=(j-K)_+}^{j-1}  (\dist_\ell + \dist_{\ell+1})\big) + \beta \hessub \sum_{j=(k-K)_+}^{k-1}\|x^j - x^{j-1}\|  \nonumber \\
	&\leq& \big( 3\gamma \hessub^2 K  \big)  \max_{(k-2K)_+\leq\ell\leq k-1}\dist_\ell + \beta \hessub \sum_{j=(k-K)_+}^{k-1}\|x^j - x^{j-1}\|. \label{ineq-grad-error-ub-iagm}
	\eeqa
\noindent Note that when $\beta=0$, this inequality reduces to \eqref{ineq-iag-main-grad-error-bound} obtained for IAG. From the inner update equation \eqref{iter-x-update-bis}, we also have
   \beqa \|x^j - x^{j-1}\| &\leq& \gamma \|g^{j-1}\| + \beta \| x^{j-1}-x^{j-2}\| \nonumber \\
   &\leq & \gamma \|g^{j-1}\| + \beta (\| x^{j-1}-x^*\| + \| x^{j-2}-x^*\|)  \nonumber \\
   &\leq & \gamma \|\nabla f(x^{j-1})\| + \gamma \|e^{j-1}\| + \beta (\dist_{j-1} + \dist_{j-2}) \nonumber \\
    &\leq & \gamma \hessub \dist_{j-1} + \gamma \|e^{j-1}\| + 2\beta \max(\dist_{j-1},\dist_{j-2}) \label{ineq-conseq-iter-dist}    
   \eeqa
where we used \eqref{grad-lip-bd} to bound the norm of the gradient in the last inequality. We next bound the gradient error term $\|e^{j-1}\|$ on the right-hand side. A consequence of \eqref{ineq-grad-error-dist-based}, the triangle inequality and the boundedness of the gradient delays is that gradient error is bounded by
  \beqa \|e^k\|
             &\leq& \sum_{i=1}^m \hessubi (\dist_{\tik}+\dist_k) 
         \leq 2L \max_{(k-K)_+ \leq \ell\leq k} \dist_\ell, \quad k\geq 0 \label{ineq-ek-bound-simple}.
  \eeqa
Combining all the inequalities \eqref{ineq-grad-error-ub-iagm}, \eqref{ineq-conseq-iter-dist} and \eqref{ineq-ek-bound-simple} together, leads to
     \beq \|e^k\|  \leq \bigg( 3\gamma \hessub^2 K  + \beta L K (3\gamma L + 2 \beta) \bigg)  \max_{(k-2K-1)_+\leq\ell\leq k-1}\dist_\ell \label{ineq-grad-error-ub-iagm-2},
	\eeq
which is an analogue of \eqref{ineq-iag-main-grad-error-bound}	for IAG-M. Then, following the same line of argument with the proof of Theorem \ref{theorem-IAG-global-lin-conv}, it is straightforward to show a linear rate for IAG-M as long as the momentum parameter $\beta$ is not very large. More specifically, it follows that when $\beta \leq b \gamma^p$ with $p\geq 1/2$ and a positive constant $b$ and $\gamma$ is small enough, IAG-M is globally linearly convergent. The bounds on $\gamma$ and $b$ that guarantee linear convergence can also be derived in a similar way to the proof technique of Theorem \ref{theorem-IAG-global-lin-conv}. We omit the details for the sake of brevity and leave it to the reader.
\end{appendix}	 

\end{document}